\newtheorem{theorem}{Theorem}[section]
\newtheorem{conjecture}[theorem]{Conjecture}
\newtheorem{corollary}[theorem]{Corollary}
\newtheorem{proposition}[theorem]{Proposition}
\theoremstyle{definition}
\newtheorem{definition}[theorem]{Definition}
\begin{document}

\title{Omnipersistent Signatures}
\author{James W Anderson\\School of Mathematical Sciences\\University of Southampton\\Southampton SO17 1BJ England\\j.w.anderson@southampton.ac.uk\\ \\Aaron Wootton\\Department of Mathematics\\University of Portland\\Portland OR 97203 USA\\wootton@up.edu}

\maketitle

\begin{abstract}
In this note, we lay the groundwork for a new approach to the problem of group-signature classification of group actions on closed Riemann surfaces. This new approach first focuses on analyzing the low level arithmetic conditions on signatures before invoking the more complicated group theory. We provide the complete first step in this approach by giving the complete list of signatures which arithmetically {\em could} appear as a signature in every possible genus, and the subset of those which {\em do} appear as the signature of a group action in every possible genus. 

\medskip
\noindent
MSC 14H37; 30F20; 57M60

\medskip
\noindent
keywords: Riemann surface, automorphism group, signature
\end{abstract}

\section{Introduction}
\label{introduction}

The purpose of this note is two-fold.  First, we determine the complete list ${\mathfrak p} = \cap_{\sigma\ge 2} {\mathcal P}_\sigma$ of omnipersistent potential signatures, by proving that there exists a lattice structure on the space ${\mathcal P} = \{ {\mathcal P}_{\sigma}\: |\: \sigma\ge 2\}$ of potential signatures.  

In words, the tuple $(h; m_1,\ldots, m_r)$ for $h\ge 0$ and $r\ge 0$ is a {\em potential signature} in genus $\sigma\ge 2$, and so by definition lies in ${\mathcal P}_{\sigma}$, if it satisfies the number-theoretic conditions of being a signature in that genus, as given in Definition \ref{definition potential signature}.  (In the case $r=0$, we have the $1$-tuple $(h; -)$.) We do not require that this tuple be the signature of the action of some group acting by conformal homeomorphisms on some closed Riemann surface of genus $\sigma$.   A potential signature is {\em omnipersistent} if it arises as a potential signature for every genus $\sigma\ge 2$.  

In this note, we prove the following.

\medskip
\noindent
{\bf Theorem} {\bf \ref{theorem potential omnipersistent}.} {\em The omnipersistent potential signatures are 
\[ {\mathfrak p} = \cap_{\sigma\ge 2} {\mathcal P}_\sigma = {\mathcal P}_2. \] }

Second, we determine the complete list ${\mathfrak a} = \cap_{\sigma\ge 2} {\mathcal A}_\sigma$ of omnipersistent actual signatures.  In words, the tuple $(h; m_1,\ldots, m_r)$ for $h\ge 0$ and $r\ge 0$ is a signature in genus $\sigma$ (with the same convention as above, that in the case $r=0$ we have the $1$-tuple $(h; -)$), and so by definition lies in ${\mathcal A}_\sigma$, if there exists a closed Riemann surface $X$ of genus $\sigma\ge 2$ and a group $G$ acting by conformal homeomorphisms on $X$ for which the quotient $X/G$ has genus $h$ and $r$ branch points of orders $m_1,\ldots, m_r$.  (And so the case $r=0$ arises for a fixed point free action of $G$ on $X$.) A signature is {\em omnipersistent} if it arises as a signature for every genus $\sigma\ge 2$.

In this note, we prove the following.

\medskip
\noindent
{\bf Theorem} {\bf \ref{theorem omnipersistent}.} {\em The omnipersistent actual signatures are 
\[ {\mathfrak a} = \cap_{\sigma\ge 2} {\mathcal A}_\sigma = \{ (2; -), (1;2,2), (0;2,2,2,2,2), (0;2,2,2,2,2,2)\}. \] }
\medskip

Though interesting in its own right, our work here is part of a larger plan: we wish to understand possible structures on the space of signatures of actions of groups of conformal homeomorphisms on closed Riemann surfaces, and our work here provides the groundwork for a new approach to this problem.  

It is a classical undertaking in the theory of closed Riemann surfaces to determine the complete list of all finite groups that act on a Riemann surface of some genus $\sigma\ge 2$ and the signatures of these actions.  

One standard approach to this problem is to fix a genus $\sigma \ge 2$ and invoke a brute force search for groups acting by conformal homeomorphisms on some closed Riemann surface of genus $\sigma$ and the signatures arising from these actions.  As there are only finitely many groups and signatures which can occur in a fixed genus, each can be tested with respect to the necessary and sufficient conditions provided by Riemann's existence theorem, see Theorem \ref{theorem existence}, to check which ones occur. 

Increased computational power has provided significant progress using this approach over the last few decades, starting with complete lists for surfaces of genus $3$ by Broughton \cite{Bro}, followed by other results for other small genera.  An inexhaustive list includes for example Bogopol'ski\u\i \: \cite{Bog}, Kuribayashi and Kimura \cite{Kur} and  Breuer \cite{Breu}. 

A second approach to the problem is to instead fix a group, or family of groups, and determine all actions by conformal homeomorphisms for that family over all closed Riemann surfaces  by allowing the genus $\sigma$ to vary. Rather than filling out a complete list for a fixed genus, this approach focuses on adding actions to infinite sequences of genera, the general philosophy being that as more such families are added, the lists for each individual $\sigma$ will become closer to the actual complete list. 

We do not provide a complete list here, but there are many examples illustrating this approach. For example, in Kulkarni \cite{Kul}, it is shown that for every genus $\sigma\ge 2$, there is a cyclic group action of order $4\sigma+2$ acting on a closed Riemann surface of genus $\sigma$ with signature $(0;2,2\sigma+1,4\sigma+2)$, and in Breuer \cite{Breu}, it is shown that for every genus $\sigma\ge 2$, there is an Abelian group action by a group of order $4\sigma+4$ on a closed Riemann surface of genus $\sigma$ with signature $(0;2,2\sigma+2,2\sigma+2)$.

There are many ways to refine each of these searches that will speed up progress, but the number of possible groups and their complexity significantly increase as genus increases, so a definitive answer for arbitrary $\sigma$ will always remain to be elusive -- there will never be enough computational power to provide a definitive answer. In short, the group theoretical conditions for a signature to be the signature of an action are computationally expensive to apply, for large genus and for groups of large order.

A second disadvantage to these approaches is that they do not provide insight into some of the basic phenomena we see within the lists themselves. For example, there are many signatures, such as $(0;2,3,7)$, which make regular appearances, whereas there are other signatures which rarely show up. As another example, contrary to naive expectation, the number of distinct group actions does not grow in a predictable way with genus; for example, in genus $17$ there are $733$ distinct actions but just $337$ for genus $18$.     

In light of the difficulties with these two classical approaches, we propose here a third approach to the problem of group-signature classification: first focus on analyzing spaces of potential signatures and how they vary as we allow the genus $\sigma$ to vary, and only afterwards invoke the group theory. 

The general philosophy for this approach is as follows: since the conditions for when a tuple is a potential signature in a given genus are much easier to check than those for whether a tuple is an actual signature in that genus, we can obtain rather complete information about potential signatures and how they relate to each other as the genus varies.  With these lists known, we can start to add families of group actions through varying genera by invoking the necessary group theory, the lists for each individual $\sigma$ becoming closer to the actual complete list the more families we add. 
 
The interest of Corollary \ref{cor-lat} and Theorems \ref{theorem potential omnipersistent} and \ref{theorem omnipersistent} are that they provide the first natural step in this new approach. Specifically, Corollary \ref{cor-lat} gives the general structure of how potential signatures for group actions vary between genera. Theorem \ref{theorem potential omnipersistent} then provides the complete list of tuples which {\em could} appear as a signature in every possible genus, and Theorem \ref{theorem omnipersistent} gives the subset of those which {\em do} appear as the signature of a group action  in every possible genus. 

\section{Preliminaries}
\label{preliminaries}

In this Section, we introduce the necessary preliminaries.  We start with our foundational definition.

\begin{definition} (See for example Proposition 2.1 of Broughton \cite{Bro}) By a {\em potential signature} for genus $\sigma\ge 2$, we mean a tuple $(h; m_1,\ldots, m_r)$ of non-negative integers for $h\ge 0$ and $r\ge 0$ that satisfies the two necessary number-theoretic conditions for the existence of the action of a group $G$ of some order $|G| = N$ on a closed Riemann surface $X$ of genus $\sigma$. These two conditions are: 
\begin{enumerate}
\item $m_i|N$ for $1\leq i\leq r$, and 
\item the satisfaction of the Riemann-Hurwitz formula:
\begin{eqnarray}
\sigma -1 = N \left( h - 1+ \frac{1}{2} \sum_{i=1}^r \left( 1-\frac{1}{m_i} \right) \right).
\label{riemann hurwitz}
\end{eqnarray}
\end{enumerate}
\label{definition potential signature}
\end{definition}

Denote the set of all potential signatures for a fixed $\sigma\ge 2$ by $\mathcal{P}_{\sigma}$.

We have deliberately chosen a narrow definition of potential signature, using only low level arithmetic information.  This choice follows from the statement of Theorem \ref{theorem existence} below.

It is straightforward to see that $\mathcal{P}_{\sigma}$ is finite for every $\sigma\ge 2$.  We start by noting that for a given $\sigma\ge 2$ and a given potential signature $(h; m_1,\ldots, m_r)$ for genus $\sigma$, the values of $h$ and $r$ satisfy $r + 4h \le 2\sigma + 2$, see Lemma 3.1 of Anderson and Wootton \cite{anderson wootton}, and so $\sigma$ bounds both $h$ and $r$.  We have the Hurwitz bound that $N\le 84 (\sigma -1)$, and condition $1.$ of Definition \ref{definition potential signature} then bounds the $m_i$ by $N$.

Given a potential signature $s = (h; m_1,\ldots, m_r)$ and a genus $\sigma \ge 2$, a standard question is to determine conditions under which this potential signature $s$ is in fact the signature of the action of some group $G$ of conformal homeomorphisms on some closed Riemann surface $X$ of genus $\sigma$.  

One such condition involves a {\em generating vector} for $G$ associated to that signature.

\begin{definition} For a finite group $G$, a vector $(a_{1}, b_{1}, a_{2}, b_{2},\dots , a_h, b_h, c_{1},\dots , c_{r})$ of elements of $G$ is an {\em $(h;m_{1},\dots ,m_{r})$-generating vector for $G$} if the following three conditions hold:

\begin{enumerate}
\item $G=\langle a_{1},b_{1},a_{2},b_{2},\dots ,a_{h},b_{h},c_{1},\dots ,c_{r} \rangle$;
\item The order of $c_{i}$ is $m_{i}$ for $1\leq i\leq r$;
\item $\prod_{i=1}^{h} [a_{i} ,b_{i} ] \prod_{j=1}^{r} c_{j}$=1.
\end{enumerate}
\label{definition generating vector}
\end{definition}

The following adapted version of Riemann's existence theorem can be found in Broughton \cite{Bro}.

\begin{theorem} Let $X$ be a closed Riemann surface of genus $\sigma\ge 2$.  There exists an action on $X$ by a group $G$ of conformal homeomorphisms of order $|G| = N$ with signature $(h; m_1,\ldots, m_r)$ if and only if both $(h; m_1,\ldots, m_r)$ is a potential signature for $\sigma$, so that $(h; m_1,\ldots, m_r)\in \mathcal{P}_{\sigma}$, and there exists an $(h;m_{1},\dots ,m_{r})$-generating vector for $G$.
\label{theorem existence}
\end{theorem}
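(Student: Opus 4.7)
The plan is to prove this equivalence via the classical theory of Fuchsian groups and uniformization, which is the standard route for closed Riemann surfaces of genus $\sigma \ge 2$. The starting point is uniformization: every such $X$ is biholomorphic to $\mathbb{H}/K$ for some torsion-free cocompact Fuchsian surface group $K \le \mathrm{PSL}(2,\mathbb{R})$, and conformal homeomorphisms of $X$ lift to elements of $\mathrm{PSL}(2,\mathbb{R})$ normalizing $K$.

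For the forward direction, suppose $G$ acts on $X = \mathbb{H}/K$ with signature $(h; m_1, \ldots, m_r)$. I would lift $G$ to obtain a Fuchsian group $\Gamma \le \mathrm{PSL}(2, \mathbb{R})$ containing $K$ as a normal subgroup of index $N = |G|$ with $\Gamma/K \cong G$. The quotient orbifold $\mathbb{H}/\Gamma = X/G$ has underlying genus $h$ and branch points of orders $m_1, \ldots, m_r$, so $\Gamma$ admits the standard Fuchsian presentation on generators $\alpha_1, \beta_1, \ldots, \alpha_h, \beta_h, \gamma_1, \ldots, \gamma_r$ with each $\gamma_j$ elliptic of order $m_j$ and with long relation $\prod[\alpha_i, \beta_i] \prod \gamma_j = 1$. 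The images of these generators under the quotient $\Gamma \to \Gamma/K \cong G$ form an $(h; m_1, \ldots, m_r)$-generating vector by inspection, and comparing hyperbolic areas of fundamental domains for $K$ and $\Gamma$ (the former being $4\pi(\sigma - 1)$, the latter computed from the signature) yields the Riemann-Hurwitz formula, together with the divisibility $m_i \mid N$ needed to put $(h; m_1, \ldots, m_r)$ in $\mathcal{P}_\sigma$.

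For the backward direction, start with a potential signature $(h; m_1, \ldots, m_r)\in \mathcal{P}_\sigma$ and an $(h; m_1, \ldots, m_r)$-generating vector for $G$. Poincar\'e's polygon theorem produces a Fuchsian group $\Gamma$ abstractly presented as above, with the Riemann-Hurwitz identity guaranteeing that the required hyperbolic polygon has positive area. The generating vector defines a homomorphism $\phi: \Gamma \to G$ which is surjective by condition (1) of Definition \ref{definition generating vector} and well-defined because condition (3) matches the long relation while condition (2) ensures compatibility with the order relations $\gamma_j^{m_j} = 1$. Let $K = \ker \phi$; then $K$ has index $N$ in $\Gamma$, and $\mathbb{H}/K$ is closed with $G \cong \Gamma/K$ acting by conformal homeomorphisms. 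Riemann-Hurwitz then forces the genus of $\mathbb{H}/K$ to be $\sigma$.

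The step I expect to require the most care is verifying that $K$ is torsion-free, since only then is $\mathbb{H}/K$ a manifold rather than an orbifold. The key input is the classification of torsion in Fuchsian groups with signature $(h; m_1,\ldots, m_r)$: every element of finite order in $\Gamma$ is conjugate to a power $\gamma_j^k$ with $1 \le k < m_j$. Condition (2) of Definition \ref{definition generating vector}, requiring $c_j$ to have order \emph{exactly} $m_j$ in $G$, guarantees that no such power lies in $K$, so $K$ is indeed torsion-free and hence a surface group of the correct genus. This completes the proof.
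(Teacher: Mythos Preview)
The paper does not actually prove this theorem; it is quoted as a known result from Broughton \cite{Bro} (``The following adapted version of Riemann's existence theorem can be found in Broughton''), so there is no in-paper argument to compare against. Your outline is the standard Fuchsian-group proof and is correct in all essentials: the forward direction uses that $K$ is torsion-free to ensure the images $c_j$ have order exactly $m_j$ (you might state this explicitly rather than leaving it to ``by inspection''), and in the backward direction you correctly isolate the crucial point that condition~(2) of Definition~\ref{definition generating vector} is precisely what forces $\ker\phi$ to miss every conjugate of $\gamma_j^k$ for $0<k<m_j$, hence to be torsion-free. The positivity of the orbifold area needed to realize $\Gamma$ follows, as you note, from $\sigma\ge 2$ via the Riemann--Hurwitz relation. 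One cosmetic remark: the theorem as stated fixes a particular $X$, but of course the converse direction only produces \emph{some} surface of genus $\sigma$; this is the intended reading and is how the result is used throughout the paper.
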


If these conditions are met, we call $(h; m_1,\ldots, m_r)$ an {\em actual signature} for $\sigma$ and denote the set of all actual signatures for genus $\sigma\ge 2$ by $\mathcal{A}_{\sigma}$. 

Let  ${\mathfrak p} = \cap_{\sigma\ge 2} {\mathcal P}_\sigma$ and  ${\mathfrak a} = \cap_{\sigma\ge 2} {\mathcal A}_\sigma$ denote the set of {\em omnipersistent potential signatures} and {\em omnipersistent actual signatures}, respectively, which are then the potential signatures and signatures, respectively, that occur for all genera $\sigma\ge 2$.

It is clear that ${\mathcal A}_\sigma\subset {\mathcal P}_\sigma$ for every $\sigma\ge 2$, though we do not yet understand how ${\mathcal A}_\sigma$ sits inside ${\mathcal P}_\sigma$ for a given value of $\sigma$. Indeed, this is the very question at the center of our proposed approach to group-signature classification: to first understand potential signature space, and then analyze how the actual signature space sits inside the space of potential signatures.

\section{The Lattice Structure on Potential Signatures}
\label{section lattice}

The purpose of this Section is to demonstrate that the collection ${\mathcal P} = \{ {\mathcal P}_\sigma\: |\: \sigma\ge 2\}$ of the sets of potential signatures, partially ordered by the divisibility of $\sigma -1$, forms a lattice.   

Recall that a partially ordered set $(L, \le)$ forms a lattice if each $2$-element subset $\{ a, b\}\subset L$ of $L$ has a greatest lower bound, the {\em join} $a\wedge b$, and a least upper bound, the {\em meet} $a\vee b$, with the usual definitions of greatest lower bound and least upper bound. 

We start by examining how the sets ${\mathcal P}_\sigma$ of potential signatures relate to one another for different values of $\sigma$, or more precisely for different values of $\sigma -1$.  The following Proposition establishes the partial ordering on ${\mathcal P}$ with respect to $\sigma -1$.

\begin{proposition} $\mathcal{P}_{\sigma} \subseteq \mathcal{P}_{\sigma '}$ if and only if $(\sigma -1)|(\sigma' -1)$.
\label{prop containment}
\end{proposition}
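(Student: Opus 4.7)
The plan is to prove the biconditional in two directions, exploiting the fact that the Riemann--Hurwitz condition of Definition \ref{definition potential signature} can be rewritten as
\[ \sigma - 1 = N \cdot T_s, \qquad T_s := h - 1 + \tfrac{1}{2}\sum_{i=1}^{r}\left( 1 - \tfrac{1}{m_i}\right), \]
where $T_s$ is a positive rational depending only on the tuple $s = (h; m_1,\ldots, m_r)$. Membership of $s$ in $\mathcal{P}_\sigma$ then amounts to producing a positive integer $N$ simultaneously divisible by each $m_i$ that satisfies this equation; crucially, $s$ itself carries no intrinsic reference to $\sigma$, so the interaction between $\mathcal{P}_\sigma$ and $\mathcal{P}_{\sigma'}$ happens entirely through the scaling factor $N$.

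For the sufficiency direction ($\Leftarrow$), I would take an arbitrary $s \in \mathcal{P}_\sigma$ with witnessing order $N$ and simply rescale: if $\sigma' - 1 = k(\sigma - 1)$ for a positive integer $k$, then $N' := kN$ still satisfies $m_i \mid N'$ for each $i$, and multiplying the Riemann--Hurwitz identity for $\sigma$ by $k$ yields the identity for $\sigma'$ with order $N'$. This places the same tuple $s$ in $\mathcal{P}_{\sigma'}$ and is essentially a one-line verification.

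For the necessity direction ($\Rightarrow$), the strategy is to exhibit a single carefully chosen element of $\mathcal{P}_\sigma$ whose presence in $\mathcal{P}_{\sigma'}$ already forces the desired divisibility. My candidate is the tuple $s = (\sigma; -)$ with orbit genus $\sigma$ and no periods, for which condition (1) of Definition \ref{definition potential signature} is vacuous and the Riemann--Hurwitz identity reduces to $\sigma - 1 = N(\sigma - 1)$, witnessed by the order $N = 1$. Granting $s \in \mathcal{P}_{\sigma'}$ then produces a positive integer $N'$ with $\sigma' - 1 = N'(\sigma - 1)$, which is precisely the divisibility $(\sigma - 1) \mid (\sigma' - 1)$.

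The one delicate point I anticipate is justifying that $N = 1$ is an admissible witness for $(\sigma; -) \in \mathcal{P}_\sigma$: the definition only demands that the number-theoretic conditions hold for \emph{some} order $|G| = N$, and since the trivial group is a group of order $1$ and the divisibility condition is vacuous when $r = 0$, I expect this to require at most a passing remark rather than constituting a genuine obstacle to the argument.
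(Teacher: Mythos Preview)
Your argument is correct. The sufficiency direction is identical to the paper's: both rescale the witnessing order $N$ by the integer $k=(\sigma'-1)/(\sigma-1)$.

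For necessity the two proofs diverge. The paper exhibits the Wiman signature $(0;2,2\sigma+1,4\sigma+2)\in\mathcal{P}_\sigma$ and shows that if this tuple also lay in $\mathcal{P}_{\sigma'}$, then the Riemann--Hurwitz equation together with the divisibility requirement $4\sigma+2\mid N$ forces $(\sigma-1)\mid(\sigma'-1)$. Your choice of $(\sigma;-)$ is more economical: with $r=0$ the divisibility condition is vacuous and Riemann--Hurwitz collapses to $\sigma'-1=N'(\sigma-1)$, so the conclusion is immediate. The trade-off is that your witness leans on the edge case $N=1$, which you rightly flag; this is legitimate under the paper's conventions (indeed $(2;-)\in\mathcal{P}_2$ already forces the authors to admit $N=1$), but a reader who insists on nontrivial groups would balk. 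The paper's Wiman signature avoids this objection entirely, since it is realised by a cyclic group of order $4\sigma+2\ge 10$, and has the incidental benefit of exercising both clauses of Definition~\ref{definition potential signature} rather than just the Riemann--Hurwitz relation.
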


\begin{proof} First suppose that $(\sigma -1)|(\sigma' -1)$ and let $(h;m_1,\dots ,m_r)\in \mathcal{P}_{\sigma}$.  Definition \ref{definition potential signature} is then satisfied, and in particular the Riemann-Hurwitz formula is  satisfied, and so there is some $N$ for which 
$$\sigma -1=N \left( h-1 +\frac{1}{2} \sum_{i=1}^{r} \left( 1-\frac{1}{m_i} \right) \right).$$ 
Set $K=(\sigma' -1)/(\sigma -1)$.  Multiplying both sides of the equation by $K$, we get 
$$\sigma' -1=NK \left( h-1 +\frac{1}{2} \sum_{i=1}^{r} \left( 1-\frac{1}{m_i} \right) \right).$$ 
Moreover, since $m_j | N$ for $1\le j\le r$, we have that $m_j | NK$ for $1\le j\le r$.  

Since both conditions of Definition \ref{definition potential signature} are satisfied, we see that the tuple $(h;m_1,\dots ,m_r)\in {\mathcal P}_{\sigma'}$ is a potential signature for the action by conformal homeomorphisms of some group $G'$ of order $NK$ on a closed Riemann surface of genus $\sigma'$, and so ${\mathcal P}_{\sigma}\subseteq {\mathcal P}_{\sigma'}$.  As a note, we remind ourselves that we don't know that such a group $G'$ actually exists or what it might be if it does exist, as we are working with potential signatures.

Now suppose that  $(\sigma -1)\nmid (\sigma' -1)$. To show that $\mathcal{P}_{\sigma} \not\subseteq \mathcal{P}_{\sigma '}$ we give an example of a potential signature in $\mathcal{P}_{\sigma}$ that is not in $\mathcal{P}_{\sigma '}$. Consider the signature $(0;2,2\sigma+1,4\sigma+2)$. 

It is known that $(0;2,2\sigma+1,4\sigma+2)$ is an actual signature for the action by conformal homeomorphisms of some group $G_\sigma$ of order $4\sigma + 2$ on a closed Riemann surface of genus $\sigma$ for every $\sigma\ge 2$, see for Example 9.7 of Breuer \cite{Breu}.  As an actual signature, we then immediately have that $(0;2,2\sigma+1,4\sigma+2)\in {\mathcal P}_{\sigma}$ for every $\sigma\ge 2$.

Suppose $(0;2,2\sigma+1,4\sigma+2)$ is also a potential signature for the action by conformal homeomorphisms of some group $G$ of order $N$ on a closed Riemann surface of genus $\sigma'\ne\sigma$.  In this case, we see using Definition \ref{definition potential signature} that $(0;2,2\sigma+1,4\sigma+2)$ would then satisfy the Riemann-Hurwitz formula, which simplifies to
$$\sigma' -1=\frac{N}{2} \left( \frac{\sigma -1}{2\sigma+1} \right),$$ 
and so
$$N=\frac{2(2\sigma+1)(\sigma' -1)}{\sigma -1} .$$ 
Now since $(0;2,2\sigma+1,4\sigma+2)$ contains a period equal to $4\sigma+2$, it must be that $4\sigma+2$ then divides $N$, so $N=(4\sigma+2)K$ for some integer $K$, noting here that $K>1$ as otherwise $\sigma'=\sigma$. 

Therefore, we get 
$$N=(4\sigma+2)K=\frac{2(2\sigma+1)(\sigma' -1)}{\sigma -1},  \text{ giving } K=\frac{\sigma' -1}{\sigma -1}.$$ 
In particular, $(\sigma' -1)/(\sigma -1)$ is an integer and so $(\sigma -1)|(\sigma' -1)$, a contradiction. 

Therefore, if $(\sigma -1)\nmid (\sigma' -1)$, then $(0;2,2\sigma+1,4\sigma+1) \in \mathcal{P}_{\sigma}$ but $(0;2,2\sigma+1,4\sigma+1) \not\in \mathcal{P}_{\sigma '}$ and therefore $\mathcal{P}_{\sigma} \not\subseteq \mathcal{P}_{\sigma '}$.
\end{proof}

Proposition \ref{prop containment} allows us to define the meet ${\mathcal P}_\sigma\wedge {\mathcal P}_{\sigma'}$ as the intersection $\mathcal{P}_{\sigma} \cap \mathcal{P}_{\sigma '}$.

\begin{corollary} (The meet of ${\mathcal P}_\sigma$ and ${\mathcal P}_{\sigma'}$) If $\gcd{\left( (\sigma -1), (\sigma' -1) \right)}=\Sigma -1$, then $\mathcal{P}_{\sigma} \cap \mathcal{P}_{\sigma '} =\mathcal{P}_{\Sigma}$.
\label{corollary intersection}
\end{corollary}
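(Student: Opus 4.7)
The plan is to establish the two containments of the claimed equality separately, using Proposition \ref{prop containment} for the easy direction and a gcd computation for the hard one. For the inclusion $\mathcal{P}_\Sigma \subseteq \mathcal{P}_\sigma \cap \mathcal{P}_{\sigma'}$: since $\Sigma - 1 = \gcd((\sigma-1),(\sigma'-1))$ divides each of $\sigma - 1$ and $\sigma' - 1$, Proposition \ref{prop containment} immediately yields both $\mathcal{P}_\Sigma \subseteq \mathcal{P}_\sigma$ and $\mathcal{P}_\Sigma \subseteq \mathcal{P}_{\sigma'}$, and intersecting gives the claim.

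For the reverse inclusion I would argue directly at the level of Riemann--Hurwitz. Given $(h; m_1, \ldots, m_r) \in \mathcal{P}_\sigma \cap \mathcal{P}_{\sigma'}$, applying Definition \ref{definition potential signature} in each genus produces positive integers $N$ and $N'$, each divisible by every $m_i$, with
$$\sigma - 1 = NT \quad \text{and} \quad \sigma' - 1 = N'T,$$
where $T := h - 1 + \tfrac{1}{2}\sum_{i=1}^{r}(1 - 1/m_i)$ depends only on the tuple and is positive since $\sigma,\sigma'\ge 2$. The natural candidate for a witnessing order in genus $\Sigma$ is $M := \gcd(N, N')$. That every $m_i$ divides $M$ is immediate from $m_i \mid N$ and $m_i \mid N'$, so the only remaining condition of Definition \ref{definition potential signature} to verify is the Riemann--Hurwitz identity $MT = \Sigma - 1$.

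The main obstacle is precisely this gcd identity, since $T$ need not be an integer and one cannot simply ``divide through by $T$'' inside a $\gcd$. I would handle it by writing $T$ in lowest terms, deducing from the integrality of $NT$ and $N'T$ that the denominator of $T$ divides both $\sigma - 1$ and $\sigma' - 1$ and hence divides $\Sigma - 1$, and then applying the elementary identity $\gcd(ax, ay) = a\gcd(x,y)$ to conclude $\gcd(N, N') = (\Sigma - 1)/T$. Once this scaling identity is in hand, rearrangement gives $MT = \Sigma - 1$ and the corollary follows at once.
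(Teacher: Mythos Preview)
Your easy direction matches the paper's exactly. For the reverse inclusion the paper takes a different route: rather than setting $M=\gcd(N,N')$, it invokes B\'ezout's identity to write $\Sigma-1=a(\sigma-1)+b(\sigma'-1)$ and then observes directly that $\Sigma-1=(aN+bN')T$ with $m_j\mid aN+bN'$ for each $j$, so $aN+bN'$ serves as the witnessing order. This sidesteps any manipulation of the rational number $T$ entirely and is a one-line computation.

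Your approach via $M=\gcd(N,N')$ also works and is arguably more canonical (the witness does not depend on a choice of B\'ezout coefficients), but there is a slip in the sketch as written. From the integrality of $NT$ and $N'T$ with $T=p/q$ in lowest terms one deduces $q\mid N$ and $q\mid N'$, \emph{not} $q\mid(\sigma-1)$ and $q\mid(\sigma'-1)$; for instance the signature $(0;2,3,7)$ has $T=1/84$ while $\sigma-1$ can equal $2$. The repair is immediate once you track the correct divisibility: writing $N=q\alpha$ and $N'=q\alpha'$ gives $\sigma-1=p\alpha$ and $\sigma'-1=p\alpha'$, whence $\Sigma-1=p\gcd(\alpha,\alpha')$ and $M=q\gcd(\alpha,\alpha')$, so indeed $MT=p\gcd(\alpha,\alpha')=\Sigma-1$ as you claimed.
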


\begin{proof} By Proposition \ref{prop containment}, we know that $\mathcal{P}_{\sigma} \cap \mathcal{P}_{\sigma '} \supseteq \mathcal{P}_{\Sigma}$, since $\Sigma-1$ divides both $\sigma -1$ and $\sigma' -1$. 

Therefore, we just need to show containment in the other direction, so suppose that $(h;m_1,\dots ,m_r) \in \mathcal{P}_{\sigma} \cap \mathcal{P}_{\sigma '}$. Then the Riemann-Hurwitz formula is satisfied for both $\sigma$ and $\sigma'$, and so there exist $K$ and $N$ so that
$$\sigma -1=N \left( h-1 +\frac{1}{2} \sum_{i=1}^{r} \left( 1-\frac{1}{m_i} \right) \right)$$ 
and 
$$\sigma' -1=K \left( h-1 +\frac{1}{2} \sum_{i=1}^{r} \left( 1-\frac{1}{m_i} \right) \right).$$ 
Using Bezout's identity, there exists integers $a$ and $b$ such that $\Sigma -1=a(\sigma -1)+b(\sigma' -1)$.  Substituting, we see that
\begin{eqnarray*}
\Sigma -1 & = & aN \left( h-1 +\frac{1}{2} \sum_{i=1}^{r} \left( 1-\frac{1}{m_i} \right) \right) +bK \left( h-1 +\frac{1}{2} \sum_{i=1}^{r} \left( 1-\frac{1}{m_i} \right) \right)  \\
& = & (aN+bK) \left( h-1 +\frac{1}{2} \sum_{i=1}^{r} \left( 1-\frac{1}{m_i} \right) \right).
\end{eqnarray*}

Moreover, we have that $m_j | N$ and $m_j | K$ for $1\le j\le r$, and so clearly we have that $m_j | (aN + bK)$ for $1\le j\le r$.

Combining the above, we see by Definition \ref{definition potential signature} that $(h;m_1,\dots ,m_r) \in \mathcal{P}_{\Sigma}$, as desired.
\end{proof}

Proposition \ref{prop containment} also allows us to determine the join ${\mathcal P}_\sigma\vee {\mathcal P}_{\sigma'}$ in terms of the least common multiple of $\sigma -1$ and $\sigma' -1$.

\begin{corollary} (The join of ${\mathcal P}_\sigma$ and ${\mathcal P}_{\sigma'}$) The smallest value of $\Sigma$ for which $\mathcal{P}_{\Sigma}$ contains both $\mathcal{P}_{\sigma}$ and $\mathcal{P}_{\sigma '}$ satisfies ${\rm lcm}{\left( (\sigma -1), (\sigma' -1) \right)}=\Sigma -1$. Moreover, if $\mathcal{P}_{\Sigma'}$ is any other set containing $\mathcal{P}_{\sigma}$ and $\mathcal{P}_{\sigma '}$, then $(\Sigma -1)|(\Sigma'-1)$.
\end{corollary}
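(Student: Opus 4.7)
The plan is to reduce this entirely to Proposition \ref{prop containment}, which gives us complete control over when one $\mathcal{P}_\tau$ sits inside another in terms of divisibility of $\tau-1$. Everything in the statement is really a divisibility statement about $\Sigma-1$ in disguise.

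First I would translate the hypothesis into a divisibility condition. By Proposition \ref{prop containment}, $\mathcal{P}_\sigma\subseteq\mathcal{P}_\Sigma$ holds if and only if $(\sigma-1)\mid(\Sigma-1)$, and $\mathcal{P}_{\sigma'}\subseteq\mathcal{P}_\Sigma$ holds if and only if $(\sigma'-1)\mid(\Sigma-1)$. Hence $\mathcal{P}_\Sigma$ contains both $\mathcal{P}_\sigma$ and $\mathcal{P}_{\sigma'}$ if and only if $\Sigma-1$ is a common multiple of $\sigma-1$ and $\sigma'-1$.

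Next I would identify the minimal such $\Sigma$. The set of positive common multiples of $\sigma-1$ and $\sigma'-1$ has least element $\mathrm{lcm}\bigl((\sigma-1),(\sigma'-1)\bigr)$, and this common multiple is at least $\max(\sigma-1,\sigma'-1)\ge 1$, so the corresponding $\Sigma\ge 2$ is admissible as a genus. Thus the smallest $\Sigma$ for which $\mathcal{P}_\Sigma\supseteq\mathcal{P}_\sigma\cup\mathcal{P}_{\sigma'}$ is precisely the one with $\Sigma-1=\mathrm{lcm}\bigl((\sigma-1),(\sigma'-1)\bigr)$.

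Finally, for the second assertion, if $\mathcal{P}_{\Sigma'}$ is any other set containing both $\mathcal{P}_\sigma$ and $\mathcal{P}_{\sigma'}$, then the same application of Proposition \ref{prop containment} shows that $\Sigma'-1$ is a common multiple of $\sigma-1$ and $\sigma'-1$, and hence is divisible by their least common multiple. Since $\Sigma-1=\mathrm{lcm}\bigl((\sigma-1),(\sigma'-1)\bigr)$, we conclude $(\Sigma-1)\mid(\Sigma'-1)$, as claimed. There is no real obstacle here; the work has already been done in Proposition \ref{prop containment}, and the corollary amounts to translating the statement $\mathcal{P}_\sigma\cup\mathcal{P}_{\sigma'}\subseteq\mathcal{P}_\Sigma$ into a pair of divisibility relations and invoking the universal property of $\mathrm{lcm}$.
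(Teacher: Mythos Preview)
Your proof is correct and follows essentially the same approach as the paper: both reduce the statement to Proposition~\ref{prop containment} and the universal property of the least common multiple. Your write-up is in fact more careful (you also check $\Sigma\ge 2$), and you state the divisibility in the correct direction, whereas the paper's proof contains a typographical slip writing $(\Sigma-1)\mid(\sigma-1)$ when it means $(\sigma-1)\mid(\Sigma-1)$.
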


\begin{proof} By Proposition \ref{prop containment}, $\mathcal{P}_{\Sigma}$ contains both $\mathcal{P}_{\sigma}$ and $\mathcal{P}_{\sigma '}$ if and only if $(\Sigma -1)|(\sigma -1)$ and $(\Sigma -1)|(\sigma' -1)$, so in particular, if and only if $\Sigma-1$ is a common multiple of $\sigma -1$ and $\sigma' -1$. The result follows.
\end{proof}

\begin{corollary} The collection ${\mathfrak P} = \{ {\mathcal P}_\sigma\: |\: \sigma\ge 2\}$ of the sets of potential signatures, partially ordered by the divisibility of $\sigma -1$, forms a lattice.   
\label{cor-lat}
\end{corollary}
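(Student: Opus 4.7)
The plan is to assemble Proposition \ref{prop containment} together with Corollary \ref{corollary intersection} and the join corollary immediately preceding the statement into the axioms of a lattice. First I would verify that the relation on $\mathfrak{P}$ induced by divisibility of $\sigma - 1$ values, namely $\mathcal{P}_\sigma \le \mathcal{P}_{\sigma'}$ exactly when $(\sigma - 1) \mid (\sigma' - 1)$, is a partial order: reflexivity and transitivity are immediate from the corresponding properties of divisibility on the positive integers, and antisymmetry follows because $(\sigma - 1) \mid (\sigma' - 1)$ together with $(\sigma' - 1) \mid (\sigma - 1)$ forces $\sigma - 1 = \sigma' - 1$, hence $\mathcal{P}_\sigma = \mathcal{P}_{\sigma'}$. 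By Proposition \ref{prop containment} this order coincides with ordinary set containment on $\mathfrak{P}$, which is the translation I will use when identifying the greatest lower bound of a pair with its set-theoretic intersection.

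With the poset structure in hand, producing greatest lower and least upper bounds for any pair $\mathcal{P}_\sigma, \mathcal{P}_{\sigma'} \in \mathfrak{P}$ is a direct invocation of the two preceding corollaries. Set $d = \gcd((\sigma-1),(\sigma'-1))$ and $\ell = \mathrm{lcm}((\sigma-1),(\sigma'-1))$; since $\sigma - 1$ and $\sigma' - 1$ are positive integers, so are $d$ and $\ell$, and therefore the indices $d+1$ and $\ell+1$ both satisfy $d+1, \ell+1 \ge 2$, so they index legitimate members of $\mathfrak{P}$. Corollary \ref{corollary intersection} then gives $\mathcal{P}_{d+1} = \mathcal{P}_\sigma \cap \mathcal{P}_{\sigma'}$ as the greatest lower bound, while the join corollary gives $\mathcal{P}_{\ell+1}$ as the least upper bound.

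I anticipate no substantive obstacle, since this is essentially a packaging corollary: Proposition \ref{prop containment}, Corollary \ref{corollary intersection}, and the join corollary already carry all of the genuine content. The only point needing even a sentence of comment is the bookkeeping that $\gcd$ and $\mathrm{lcm}$ of positive integers remain positive, which guarantees that the gcd/lcm recipe never produces an index outside the range $\sigma \ge 2$ indexing $\mathfrak{P}$.
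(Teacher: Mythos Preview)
Your proposal is correct and follows exactly the approach the paper intends: the paper states Corollary~\ref{cor-lat} without proof, treating it as an immediate consequence of Proposition~\ref{prop containment}, Corollary~\ref{corollary intersection}, and the join corollary, and your write-up simply makes that packaging explicit. The only addition you supply is the routine verification of the poset axioms and the index bookkeeping, neither of which the paper bothers to spell out.
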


As remarked in Section \ref{introduction}, the growth rate of group actions between genera is not monotone -- it appears to jump around somewhat haphazardly. However, the lattice structure given in Corollary \ref{cor-lat} does provide some partial explanation for this. Specifically, from a naive point of view, more potential signatures should signal more group actions, so in particular, the larger the number of divisors of $\sigma -1$, the more group actions we would expect. This observation is supported in the available data as illustrated, see for example Table \ref{obs-dat} where we provide the number of distinct actions (up to signature) in a given genus. 

\begin{table}[ht]
\begin{center}
\begin{tabular}{|c|c||c|c||c|c|}
\hline
Genus & \# Actions & Genus & \# Actions & Genus & \# Actions \\
\hline
2 & 21 & 3 & 49 & 4 &  64 \\ \hline
 5 &  93 & 6 & 87 & 7 &  148 \\ \hline
8 & 108  & 9 & 268 & 10 & 226 \\ \hline
 11 & 232 & 12 & 201 & 13 & 453 \\ \hline
\hline
\end{tabular}
\end{center}
\caption{\label{obs-dat}Actual Signature Space Sizes for Low Genus}
\end{table}

Interestingly, the lattice structure for $\{ {\mathcal P}_\sigma\: |\: \sigma\ge 2\}$ with respect to the divisibility of $\sigma -1$ does not descend to a lattice structure for $\{ {\mathcal A}_\sigma\: |\: \sigma\ge 2\}$, as we will see below. 

\begin{proposition} The collection ${\mathfrak A}$ of actual signatures does not admit a lattice structure arising from the divisibility of $\sigma -1$.
\label{prop actual no lattice}
\end{proposition}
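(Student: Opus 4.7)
The plan is to show that the direct analog of Proposition \ref{prop containment} fails for actual signatures, which is enough to preclude any lattice structure on $\mathfrak{A}$ coming from divisibility of $\sigma - 1$. For such a structure to exist in the same sense as Corollary \ref{cor-lat}, we would need at minimum that $\mathcal{A}_\sigma \subseteq \mathcal{A}_{\sigma'}$ whenever $(\sigma - 1) | (\sigma' - 1)$, so that intersection and join could be described via $\gcd$ and $\mathrm{lcm}$ of $\sigma - 1$ exactly as in Corollary \ref{corollary intersection}. Thus it suffices to exhibit a single pair $(\sigma, \sigma')$ with $(\sigma - 1) | (\sigma' - 1)$ and some signature in $\mathcal{A}_\sigma \setminus \mathcal{A}_{\sigma'}$.

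The counterexample I would use is the Kulkarni signature $s = (0; 2, 5, 10)$ with $\sigma = 2$ and $\sigma' = 3$, noting that $(\sigma - 1) = 1$ divides $(\sigma' - 1) = 2$. By the already-invoked Example 9.7 of \cite{Breu}, $s$ arises in genus 2 from a cyclic action of order 10, so $s \in \mathcal{A}_2$. For genus 3, the Riemann-Hurwitz formula applied to $s$ forces any acting group $G$ to have order $N = 10(\sigma' - 1) = 20$, and it remains to verify that no group of order 20 admits an $s$-generating vector $(c_1, c_2, c_3)$ with $|c_1| = 2$, $|c_2| = 5$, $|c_3| = 10$, and $c_1 c_2 c_3 = 1$.

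The main step, though not a deep obstacle, is the case check across the five groups of order 20: $\mathbb{Z}_{20}$, $\mathbb{Z}_2 \times \mathbb{Z}_{10}$, the dihedral group of order 20, the Frobenius group $F_{20} \cong \mathbb{Z}_5 \rtimes \mathbb{Z}_4$, and the dicyclic group $\mathrm{Dic}_5$. In each case I would argue that $\langle c_1, c_2, c_3 \rangle$ cannot have order 20. For the two abelian groups, commuting $c_1$ and $c_2$ of coprime orders 2 and 5 generate a cyclic subgroup of order 10, and $c_3 = (c_1 c_2)^{-1}$ lies inside that same subgroup. The group $F_{20}$ has no elements of order 10 at all, so $c_3$ cannot exist. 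In the dihedral and dicyclic groups of order 20, every element of order 5 or 10 lies in the unique cyclic subgroup of order 10, so $c_2$, $c_3$, and hence $c_1 = (c_2 c_3)^{-1}$ are all trapped there, again yielding a subgroup of order only 10. Thus $s \notin \mathcal{A}_3$, contradicting the required containment and proving the proposition.
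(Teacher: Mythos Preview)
Your proof is correct and follows the same overall strategy as the paper --- show that the containment $\mathcal{A}_\sigma \subseteq \mathcal{A}_{\sigma'}$ fails for some pair with $(\sigma-1)\mid(\sigma'-1)$ --- but with a genuinely different counterexample. The paper uses $(0;2,3,7)$ with $\sigma=3$, $\sigma'=5$ (so $2\mid 4$), simply citing Breuer's database for the fact that $(0;2,3,7)\in\mathcal{A}_3\setminus\mathcal{A}_5$; there is no direct group-theoretic verification in the paper. You instead use $(0;2,5,10)$ with $\sigma=2$, $\sigma'=3$, and give a complete hand check that no group of order $20$ admits a $(0;2,5,10)$-generating vector. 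Your case analysis is sound: in the abelian groups the generators are trapped in a cyclic subgroup of order $10$; $F_{20}$ has no element of order $10$; and in the dihedral and dicyclic groups of order $20$ all elements of order $5$ or $10$ lie in the unique cyclic subgroup of order $10$, forcing $c_1=(c_2c_3)^{-1}$ there too. What your approach buys is self-containment --- no appeal to an external computer-generated list --- at the cost of a short case check; the paper's approach is shorter on the page but defers the actual work to \cite{Breu}. Incidentally, your choice of $(0;2,5,10)$ is confirmed by the paper's own Table~\ref{tab-actual-sigs}, which lists $3$ as the smallest genus where that signature fails.
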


\begin{proof}  If the actual signatures ${\mathfrak A} = \{{\mathcal A}_\sigma\: |\: \sigma\ge 2\}$ were partially ordered by the divisibility of $\sigma -1$, then we would have that ${\mathcal P}_3\subset {\mathcal P}_n$ for all $n$ odd.  

However, we have famously that $(0; 2, 3, 7)\in {\mathcal A}_3$ but we also have $(0; 2, 3, 7)\not\in {\mathcal A}_5$, see for example the database from \cite{Breu}.
\end{proof}

We close this Section with the following conjecture.  

\begin{conjecture} There is no lattice structure on $\{ {\mathcal A}_\sigma\: |\: \sigma\ge 2\}$.
\end{conjecture}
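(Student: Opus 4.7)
The plan is to interpret ``lattice structure'' in the most natural sense given the paper's framework, namely a partial order on the collection $\{\mathcal{A}_\sigma : \sigma \ge 2\}$ under which every pair has a greatest lower bound given by intersection and a least upper bound given by some minimal $\mathcal{A}_\tau$ containing both.  This parallels the situation for $\{\mathcal{P}_\sigma\}$ in Corollary \ref{cor-lat}, and is the content implicitly being denied by the conjecture.

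First, I would strengthen Proposition \ref{prop actual no lattice}: even after dropping the requirement that the partial order be induced specifically by divisibility of $\sigma - 1$, set inclusion on $\{\mathcal{A}_\sigma\}$ should not yield a lattice.  The strategy is to exhibit genera $\sigma_1, \sigma_2$ for which $\mathcal{A}_{\sigma_1} \cap \mathcal{A}_{\sigma_2}$ strictly contains the omnipersistent core $\mathfrak{a}$ together with a small number of extra signatures, and then check against the list $\{\mathcal{A}_\tau\}$, using Breuer's database \cite{Breu}, that no single $\mathcal{A}_\tau$ equals this intersection.  Dually, one would seek pairs for which there is no smallest $\mathcal{A}_\tau$ containing both $\mathcal{A}_{\sigma_1}$ and $\mathcal{A}_{\sigma_2}$, or where two distinct minimal choices coexist.

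Next, I would rule out partial orders inherited from arithmetic or combinatorial relations on the indices $\sigma$ themselves, rather than from set inclusion.  Generalizing the mechanism of Proposition \ref{prop actual no lattice}, for any such candidate relation one can produce sporadic counterexamples --- for instance $(0;2,3,7) \in \mathcal{A}_3 \setminus \mathcal{A}_5$, or the reverse jumps visible in the $17 \to 18$ drop from Table \ref{obs-dat} --- that are incompatible with the monotonicity required for a candidate order to yield inclusions of the form $\mathcal{A}_\sigma \subseteq \mathcal{A}_{\sigma'}$.  A single such incompatibility rules out a candidate order, and the seemingly erratic growth illustrated in Table \ref{obs-dat} suggests that no reasonable arithmetic relation will survive.

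The main obstacle, and the reason this is stated as a conjecture rather than a theorem, is the universal quantifier hidden in ``no lattice structure.''  Any countable set admits some lattice structure by an abstract order-isomorphism to a known lattice, so the conjecture must implicitly require compatibility with the signature-theoretic content of each $\mathcal{A}_\sigma$ --- for instance, that meets reflect intersection and joins reflect minimal containing set.  Formulating the right naturality condition and then showing it is violated is the crucial conceptual step; once the condition is pinned down, the irregular appearance of specific signatures across genera should provide the concrete obstruction, though producing a clean structural argument rather than a case-by-case dismissal will likely require a deeper understanding of how the generating vector condition in Theorem \ref{theorem existence} constrains which potential signatures are promoted to actual signatures in each genus.
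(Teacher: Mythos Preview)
The statement you are addressing is a \emph{conjecture} in the paper, not a theorem; the paper offers no proof of it. The only related result the paper proves is Proposition~\ref{prop actual no lattice}, which handles the single candidate order coming from divisibility of $\sigma-1$. So there is no ``paper's own proof'' to compare your proposal against.

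As for the proposal itself, you have correctly identified the essential difficulty and in fact state it yourself: the phrase ``no lattice structure'' hides a universal quantifier over all partial orders, and any countable set carries \emph{some} lattice order, so without a precise naturality condition the conjecture is literally false. Your plan therefore amounts to (i) guessing the intended naturality condition (meets as intersections, joins as minimal containing $\mathcal{A}_\tau$), and (ii) searching Breuer's database for witnesses that violate it. Step (i) is a definitional choice, not an argument, and nothing in the paper pins it down; step (ii) is a finite search that could at best rule out inclusion-based or index-arithmetic orders, not arbitrary ones. The observations you cite---the $(0;2,3,7)$ example and the $17\to 18$ drop in Table~\ref{obs-dat}---are exactly the sort of evidence that motivated the authors to \emph{state} the conjecture, but they do not constitute a proof, and you do not claim they do. In short, your proposal is a reasonable research outline for attacking the conjecture, but it is not a proof, and the paper does not contain one either.
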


\section{Omnipersistent actual signatures}
\label{section omnipersistent}

The purpose of this section is to prove Theorems \ref{theorem potential omnipersistent} and \ref{theorem omnipersistent}. Theorem \ref{theorem potential omnipersistent} is a simple consequence of the lattice structure of ${\mathcal P}$.

\begin{theorem} The omnipersistent potential signatures are 
\[ {\mathfrak p} = \cap_{\sigma\ge 2} {\mathcal P}_\sigma = {\mathcal P}_2. \]
\label{theorem potential omnipersistent}
\end{theorem}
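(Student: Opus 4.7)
The plan is to deduce this immediately from Proposition \ref{prop containment}, with essentially no further work needed. The statement $\mathfrak{p} = \mathcal{P}_2$ is a two-way containment, and one direction is wholly trivial: since $\mathcal{P}_2$ is itself one of the sets being intersected in $\cap_{\sigma\ge 2} \mathcal{P}_\sigma$, we automatically have $\cap_{\sigma\ge 2} \mathcal{P}_\sigma \subseteq \mathcal{P}_2$.

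For the reverse containment $\mathcal{P}_2 \subseteq \cap_{\sigma\ge 2} \mathcal{P}_\sigma$, the key observation is simply that when $\sigma = 2$, we have $\sigma - 1 = 1$, and $1$ divides every positive integer. Hence for every $\sigma' \ge 2$, $(\sigma - 1) \mid (\sigma' - 1)$, and Proposition \ref{prop containment} yields $\mathcal{P}_2 \subseteq \mathcal{P}_{\sigma'}$. Taking the intersection over all such $\sigma' \ge 2$ gives $\mathcal{P}_2 \subseteq \cap_{\sigma'\ge 2} \mathcal{P}_{\sigma'}$, as required.

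There is no main obstacle here; the entire content of the theorem has been loaded into Proposition \ref{prop containment}. One might add a brief sentence observing that $\mathcal{P}_2$ plays the role of a minimum element in the lattice $\mathfrak{P}$ from Corollary \ref{cor-lat} (since $\sigma - 1 = 1$ is the minimum of $\{\sigma - 1 : \sigma \ge 2\}$ under divisibility), which is exactly the lattice-theoretic reason the intersection collapses to $\mathcal{P}_2$. This framing also foreshadows Theorem \ref{theorem omnipersistent}, where the analogous statement for actual signatures fails because $\{\mathcal{A}_\sigma\}$ does not inherit the lattice structure (Proposition \ref{prop actual no lattice}), and so $\mathfrak{a}$ turns out to be a genuinely smaller, nontrivial subset of $\mathcal{A}_2$ rather than all of $\mathcal{A}_2$.
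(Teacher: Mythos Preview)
Your proof is correct and essentially the same as the paper's. The paper invokes Corollary \ref{cor-lat} (the lattice structure) in a single line, whereas you cite the underlying Proposition \ref{prop containment} and spell out both containments explicitly; since Corollary \ref{cor-lat} is itself an immediate consequence of Proposition \ref{prop containment}, the two arguments are the same in substance.
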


\begin{proof} We know by definition that ${\mathfrak p} = \cap_{\sigma\ge 2} {\mathcal P}_\sigma$; by Corollary \ref{cor-lat}, we see that $\cap_{\sigma\ge 2} {\mathcal P}_\sigma ={\mathcal P}_2$.
\end{proof}

It is a straightforward though somewhat tedious calculation, as described after Definition \ref{definition potential signature}, to show that ${\mathcal P}_2$ contains the (potential) signatures
\[ \left\{ \begin{array}{ccccc} 
( 0; 2, 2, 2, 2, 2, 2) & ( 0; 2, 6, 6 ) & ( 0; 2, 2, 4, 4 ) & ( 0; 3, 6, 6 ) & ( 0; 5, 5, 5 ) \\
( 0; 2, 2, 2, 2, 2 ) & ( 0; 2, 2, 3, 3 ) & ( 0; 2, 2, 2, 6 ) & ( 0; 4, 4, 4 ) & ( 0; 2, 8, 8 ) \\
( 0; 2, 2, 2, 4 ) & ( 0; 3, 3, 9 ) & ( 0; 2, 5, 10 ) & ( 0; 3, 4, 4 ) & ( 0; 3, 3, 6 )\\
( 0; 3, 3, 3, 3 )  & ( 0; 2, 4, 12 ) & ( 0; 2, 2, 2, 3 ) & ( 0; 3, 3, 5 ) & ( 0; 2, 4, 8 ) \\
 ( 0; 2, 3, 18 ) & ( 0; 2, 5, 5 ) & ( 0; 3, 3, 4 ) & ( 0; 2, 4, 6 ) & ( 0; 2, 3, 12 ) \\
 ( 0; 2, 3, 10 ) & ( 0; 2, 3, 9 )  & ( 0; 2, 4, 5 )  & ( 0; 2, 3, 8 ) & ( 0; 2, 3, 7 ) \\
 ( 1; 2, 2 ) & ( 1; 3 ) & ( 1; 2 )  & (2;-) &  
\end{array}   \right\}\]

As remarked in Section \ref{introduction}, there are many tuples that frequently arise as actual signatures significantly more so than others, and Theorem \ref{theorem potential omnipersistent} provides some explanation for this. We previously remarked that $(0;2,3,7)$ regularly shows up, but perusing through lists for low genus, we see for example $( 0; 2, 2, 2, 2, 2, 2)$ appearing in every genus, and $(0;4,4,4)$ appearing in nearly every genus. 

The explanation for this of course is that the potential signatures in $\mathcal{P}_{2}$ are the only potential signatures, and hence actual signatures, that might possibly appear in all but finitely many genera, and so the potential signatures in ${\mathcal P}_2$ are the only actual signatures we would expect to see with a high degree of frequency.

In fact, we can say a bit more.  Let $X\subset {\mathbb N}\setminus \{ 1\}$ be any collection, finite or infinite, that contains consecutive integers $n\ge 2$ and $n+1$.  We then have that
\[ {\mathcal P}_2 = \cap_{\sigma\ge 2} {\mathcal P}_\sigma\subseteq \cap_{\sigma\in X} {\mathcal P}_\sigma \subseteq {\mathcal P}_n\cap {\mathcal P}_{n+1} = {\mathcal P}_2, \]
where the first equality follows from Theorem \ref{theorem potential omnipersistent}; the first inclusion follows from the assumption that $X\subset {\mathbb N}\setminus \{ 1\}$; the second inclusion follows from the fact that $\{ n, n+1\}\subset X$; and the final equality follows from Corollary \ref{corollary intersection} since ${\rm gcd} (n-1, n) = 1 = 2-1$ for $n\ge 2$.

The strategy of the proof for Theorem \ref{theorem omnipersistent} is also straightforward.  We first consider the set ${\mathcal A}_2$ of those actual signatures for genus $\sigma = 2$, which is a well established set, and remove from this list those signatures which do not appear as a signature for some genus $3\le \sigma\le 48$, using the lists generated by Breuer.  

For the four signatures that remain from this process, we then construct generating vectors and apply Theorem \ref{theorem existence}, to show that they are indeed actual signatures for all genera $\sigma\ge 2$.

\begin{theorem}
The omnipersistent actual signatures are 
\[ {\mathfrak a} = \{ (2; -), (1;2,2), (0;2,2,2,2,2), (0;2,2,2,2,2,2)\}. \]
\label{theorem omnipersistent}
\end{theorem}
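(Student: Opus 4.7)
The plan is to establish both containments in the equality $\mathfrak{a} = \{(2;-), (1;2,2), (0;2,2,2,2,2), (0;2,2,2,2,2,2)\}$ using the two-stage strategy outlined by the authors. For the upper containment, I first observe that Theorem \ref{theorem potential omnipersistent} forces $\mathfrak{a} \subseteq \mathfrak{p} = \mathcal{P}_2$ while the definitions give $\mathfrak{a} \subseteq \mathcal{A}_2$. The set $\mathcal{A}_2$ is the classically known, small list of signatures of group actions in genus two. For each $s \in \mathcal{A}_2$ not lying in the four-element set above, I exhibit a single genus $\sigma$ in the range $3 \le \sigma \le 48$ for which $s \notin \mathcal{A}_\sigma$, using Breuer's tabulation \cite{Breu} as the witness. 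This finite check reduces $\mathfrak{a}$ to a subset of the four candidates.

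For the reverse containment, I produce, for each candidate and for every $\sigma \ge 2$, a finite group $G$ of the order $N$ dictated by Riemann--Hurwitz together with an explicit generating vector of the required shape; Theorem \ref{theorem existence} then yields membership in $\mathcal{A}_\sigma$. For $(2;-)$, with $N = \sigma - 1$, take $G = C_{\sigma-1} = \langle x\rangle$ and the vector $(x, 1, 1, 1)$, whose commutator product is trivially $1$. For $(1;2,2)$, with $N = 2(\sigma-1)$, take $G = C_{\sigma-1} \times C_2 = \langle x\rangle \times \langle y\rangle$ and the vector $(x, 1, y, y)$, for which $[x,1]\cdot y\cdot y = 1$, both order conditions are immediate, and $\langle x, y\rangle = G$.

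For the two remaining cases I use dihedral groups. For $(0;2,2,2,2,2)$, with $N = 4(\sigma-1)$, let $G$ be the dihedral group of order $4(\sigma-1)$, presented as $\langle r, s \mid r^{2(\sigma-1)} = s^2 = 1,\ srs = r^{-1}\rangle$, and take the tuple $(r^{\sigma-1}, s, sr, sr^{\sigma}, s)$. The element $r^{\sigma-1}$ is the central involution (nontrivial since $r$ has even order $2(\sigma-1)$), each remaining entry is a reflection hence an involution, a short manipulation using $sr = r^{-1}s$ collapses the product to the identity, and the subtuple $\{s, sr\}$ already generates $G$. For $(0;2,2,2,2,2,2)$, with $N = 2(\sigma-1)$, let $G$ be the dihedral group of order $2(\sigma-1)$ with rotation $r$ of order $\sigma-1$, and take the vector $(s, s, s, s, sr, sr)$; the product telescopes as $s^2\cdot s^2\cdot (sr)^2 = 1$, every entry has order $2$, and $\{s, sr\}$ generates $G$. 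The degenerate case $\sigma = 2$ poses no problem: the four groups collapse respectively to $C_1$, $C_2$, $C_2\times C_2$, and $C_2$, and in each case the listed vector remains valid.

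The main obstacle is not the upper containment, which reduces to a mechanical database lookup combined with Theorem \ref{theorem potential omnipersistent}, but the lower containment: producing constructions that work \emph{uniformly} in $\sigma$ rather than case-by-case for infinitely many genera. The four families chosen above are deliberately parameterized so that each of the three conditions of Definition \ref{definition generating vector} can be verified by a single calculation valid for all $\sigma \ge 2$, at which point Theorem \ref{theorem existence} delivers the result.
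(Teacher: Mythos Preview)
Your proposal is correct and follows essentially the same two-stage strategy as the paper: eliminate all but four elements of $\mathcal{A}_2$ by consulting Breuer's tables for small genera, then exhibit explicit generating vectors uniformly in $\sigma$ for the survivors. The only differences are cosmetic choices in the constructions---you use $C_{\sigma-1}\times C_2$ rather than $D_{\sigma-1}$ for $(1;2,2)$, and slightly different (but equally valid) generating vectors for the dihedral cases---none of which affects the argument.
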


\begin{proof} The actual signatures for genus $\sigma = 2$ are given in Table \ref{tab-actual-sigs}, along with the smallest genus for which the signature fails to appear for that value of $\sigma$ in the column Smallest Genus.  The four signatures with NA listed as the smallest genus are those which occur as signatures for all $2\le \sigma\le 48$.  Here, we used the lists generated by Breuer \cite{Breu}.

\begin{table}[ht]
\begin{tabular}{||c|c|c||c|c|c||}
\hline
Signature & Order & Smallest  & Signature & Order &  Smallest   \\
 &  &  Genus &  &  &   Genus  \\
\hline\hline
$(2,-)$ & $\sigma -1$ & NA & $(1;2,2)$ & $2(\sigma -1)$ & NA  \\
\hline
$(0; 2, 3, 8)$ & $48(\sigma -1)$ & 4 & $(0; 2, 4, 6)$ & $24(\sigma -1)$ & 7 \\
\hline
$(0; 3, 3, 4)$ & $24(\sigma -1)$ & 4 & $(0; 2, 4, 8)$ & $16(\sigma -1)$ & 4 \\
 \hline
$(0; 2, 2, 2, 3)$ & $12(\sigma -1)$ & 7  & $(0; 2, 6, 6)$ & $12(\sigma -1)$ & 6\\
\hline
$(0; 3, 4, 4)$ & $12(\sigma -1)$ & 6 & $(0; 2, 5, 10)$ & $10(\sigma -1)$ & 3 \\
\hline
  $(0; 2, 2, 2, 4)$ & $8(\sigma -1)$ & 6 & $(0; 2, 8, 8)$ & $8(\sigma -1)$  & 4  \\
\hline
$(0;4,4,4)$ & $8(\sigma -1)$  & 6 & $(0; 2, 2, 3, 3)$ & $6(\sigma -1)$ & 6\\
\hline
$(0; 3, 6, 6)$ & $6(\sigma -1)$ &  6 & $(0; 5, 5, 5)$ & $5(\sigma -1)$ & 3   \\
\hline
$(0; 2, 2, 2, 2, 2)$ & $4(\sigma -1)$ & NA  & $(0; 2, 2, 4, 4)$ & $4(\sigma -1)$ & 4  \\
\hline
 $(0; 3, 3, 3, 3)$ & $3(\sigma -1)$ & 3 & $(0; 2, 2, 2, 2, 2, 2)$ & $2(\sigma -1)$ & NA \\
\hline
\end{tabular}
\caption{\label{tab-actual-sigs} Actual Sigatures in Genus $\sigma$}
\end{table}

By inspection, we are left with $(2;-)$, $(1;2,2)$, $(0;2,2,2,2,2,2)$ and $(0;2,2,2,2,2)$, so we proceed by cases to show that each of these signatures arises as the signature associated to the action of a group of conformal homeomorphisms on a closed Riemann surface of genus $\sigma$ for all $\sigma\geq 2$. 

First consider the signature $(2;-)$. Let $C_{\sigma -1}=\langle x\rangle$ denote the cyclic group of order $\sigma -1$ with generator $x$. Then $(x,e,x,e)$ is a $(2;-)$-generating vector for $C_{\sigma -1}$ for each possible $\sigma\geq 2$. It follows that $C_{\sigma -1}$ acts on a closed Riemann surface of genus $\sigma$ with signature $(2;-)$ for all $\sigma\geq 2$. Hence $(2;-)$ is an omnipersistent actual signature. 

In this case, there is a clear geometric description of the action of $C_{\sigma -1}$ on a particular Riemann surface $X_\sigma$ of genus $\sigma$.  Namely, view $X_\sigma$ as a torus with $\sigma -1$ equally spaced and symmetric handles, so that there is a natural rotation of $X_\sigma$ of order $\sigma -1$.

For the signatures $(1;2,2)$ and $(0;2,2,2,2,2,2)$, denote the dihedral group of order $2(\sigma -1)$ by $D_{\sigma -1}=\langle x,y|x^{\sigma -1},y^2,yxyx\rangle$. Then $(x,e,y,y)$ is a $(1;2,2)$-generating vector for $D_{\sigma -1}$ and $(y,y,xy,xy,y,y)$ is a $(0;2,2,2,2,2,2)$-generating vector for $D_{\sigma -1}$, both for all $\sigma\geq 2$. It follows that the group $D_{\sigma -1}$ acts on closed Riemann surfaces of genus $\sigma$  with signatures $(1;2,2)$ and $(0;2,2,2,2,2,2)$ for all $\sigma\geq 2$. Hence $(1;2,2)$ and $(0;2,2,2,2,2,2)$ are omnipersistent actual signatures. 

Finally we need to consider the signature $(0;2,2,2,2,2)$. Similar to the above, we let $D_{2(\sigma -1)}=\langle x,y|x^{2(\sigma -1)},y^2,yxyx\rangle$ denote the dihedral group of order $4(\sigma -1)$. The vector $(xy,xy,y,yx^{\sigma -1},x^{\sigma -1})$ is then a $(0;2,2,2,2,2)$-generating vector for $D_{2(\sigma -1)}$ for each possible $g\geq 2$. It follows that $D_{2(\sigma -1)}$ acts on a closed Riemann surface of genus $\sigma$ with signature $(0;2,2,2,2,2)$ for all $\sigma\geq 2$. Hence $(0;2,2,2,2,2)$ is an omnipersistent actual signature. 

\end{proof}

\end{document}